\documentclass[11pt]{article}

\usepackage[T1]{fontenc}
\usepackage[utf8]{inputenc}
\usepackage[a4paper,margin=1in]{geometry}
\usepackage{amsmath,amssymb,amsthm}
\usepackage{xcolor}
\usepackage{hyperref}

\hypersetup{
  colorlinks=true,
  linkcolor=blue,
  citecolor=blue,
  urlcolor=blue
}

\newtheorem{thm}{Theorem}[section]
\newtheorem{cor}[thm]{Corollary}

\newtheorem{prop}[thm]{Proposition}
\theoremstyle{definition}
\newtheorem{defn}[thm]{Definition}
\theoremstyle{remark}
\newtheorem{rem}[thm]{Remark}
\newtheorem*{ex}{Example}
\numberwithin{equation}{section}

\newcommand{\dom}{\operatorname{dom}}
\newcommand{\ran}{\operatorname{ran}}
\newcommand{\mul}{\operatorname{mul}}
\newcommand{\dist}{\operatorname{dist}}
\DeclareMathOperator{\cl}{cl}

\title{On the Numerical Range of Linear Relations in Banach Spaces}
\author{
W.~Boubaker\thanks{Faculty of Sciences of Sfax, Department of Mathematics, BP1171, Sfax 3000, Tunisia. Email: \texttt{wissalboubaker1@gmail.com}}
\and
H.~Gernandt\thanks{Bergische Universit\"at Wuppertal, Gau\ss stra\ss e 20, 42119 Wuppertal, Germany. Email: \texttt{gernandt@uni-wuppertal.de}}
\and
W.~Selmi\thanks{Faculty of Sciences of Sfax, Department of Mathematics, BP1171, Sfax 3000, Tunisia. Email: \texttt{selmiwafa91@yahoo.fr}}
}
\date{}

\begin{document}
\maketitle

\begin{abstract}
This paper is devoted to the study of the numerical range of linear relations in Banach spaces. We present a new definition adapted to the multivalued nature of linear relations and analyze its main properties. We establish spectral inclusion results showing that the spectrum is contained in the union of the closure of the numerical range of a~linear relation and the numerical range of its Banach adjoint, together with resolvent estimates related to the distance to the numerical range. As an application, we derive spectral enclosures for operator pencils by associating them with suitable linear relations and introduce corresponding numerical ranges for operator pencils in Banach spaces. We show that this approach may provide sharper information than classical numerical ranges of operator pencils. Furthermore, we use the numerical abscissas to show that the Banach space numerical range can yield strict exponential decay for semigroups that cannot be obtained from the corresponding Hilbert space numerical range.
\end{abstract}

\medskip
\noindent\textbf{2020 Mathematics Subject Classification.} 47A06, 47A12, 47A10.

\noindent\textbf{Keywords.} Linear relations, numerical range, Banach spaces, operator pencils, dissipative relations, spectral inclusions, resolvent estimates, numerical abscissas, semigroup.

\section{Introduction}
\hskip0.5cm  The numerical range is a fundamental tool in operator theory and spectral analysis. The study of the numerical range of an operator and its generalizations has a long and  distinguished history. In 1918, O. Toeplitz \cite{ot} and F. Hausdorff \cite{ha} introduced the notion of the numerical range of a matrix  $A$ acting on a finite-dimensional space. 

For a bounded linear operator acting on a Hilbert space, the numerical range provides important information about spectral properties and resolvent estimates, see \cite{Gil} and \cite{Hilb}. In particular, it is well known that the spectrum of an operator is contained in the closure of its numerical range and that the resolvent satisfies estimates involving the distance to this set. Classical references for these results include the works of Bonsall and Duncan \cite{BD1,BD} and Kato \cite{Kato95}.

The concept of numerical range has also been extended to operators acting on Banach spaces by using the dual space  \cite{Bauer1962,Jahedi2012,Lumer61}. Recent developments also highlight the role of the numerical range as a spectral set in Banach algebras, see \cite{Blazhko_Homza_Schwenninger_deVries_Wojtylak_2025}.

Many problems arising in spectral theory naturally lead
to the study of \emph{linear relations}, which are multivalued generalizations of linear operators. These structures also play an important role in the analysis of dissipative systems and Hamiltonian problems; see, for instance, \cite{mehl2025spectral}.

Linear relations appear in several contexts such as differential operators, operator pencils, and extensions of symmetric operators. Their spectral theory has been developed in various works; see, for instance, Cross \cite{Cross98}  Ammar and Jeribi \cite{AmmarJeribi2021} and the references therein.

Recently, the numerical range of linear relations in Hilbert spaces was studied in \cite{Ammar2025}. The purpose of the present paper is to study the numerical range of linear relations in Banach spaces and to establish several properties that extend classical results known for operators. We introduce a definition of the numerical range adapted to the multivalued nature of linear relations and investigate its fundamental properties.

We prove that the point spectrum is contained in the numerical range, that the approximate point spectrum is contained in its closure, and that the full spectrum is contained in the union of the closure of the numerical range and the numerical range of the Banach adjoint relation (see Theorem~\ref{theorem1}). In particular, if $T$ is bounded and defined on the whole space, then we obtain that the spectrum is contained in the closure of the numerical range. These results generalize classical statements known for bounded operators. We also provide conditions under which the numerical range coincides with the entire complex plane and discuss situations where it is bounded (see Theorem~\ref{closed and bounded}). We also derive resolvent estimates in terms of the distance to the numerical range and discuss the role of dissipativity in this context.

Finally, we apply our results to operator pencils in Banach spaces. By associating a linear relation with an operator pencil, we obtain spectral enclosures in the spirit of \cite{GernMPT23,Wafa,GT21}. We further introduce two notions of numerical range for operator pencils in Banach spaces, extending the Hilbert space framework of \cite{BogliMarletta2019,Tretter2008} by means of the duality mapping. We establish a relationship between these numerical ranges (see Proposition~\ref{item:inclusion_wW}) and show, through a finite-dimensional example, that the numerical range of the associated linear relation may yield strictly sharper spectral enclosures than the numerical ranges of the operator pencil itself.

The paper is organized as follows. In Section~2 we recall basic notions and preliminary results on linear relations in Banach spaces. In Section~3 we introduce the numerical range for linear relations and study its fundamental properties together with spectral inclusions and resolvent estimates. Moreover, in Section~4, we apply our results to derive spectral enclosures for  operator pencils. Finally, in Section~5 we give an example of an operator where the numerical abscissa of the Banach space numerical range can be used to obtain a strict exponential decay of a~semigroup, which cannot be concluded by using the abscissa of the Hilbert space numerical range.

\section{Preliminaries on linear relations}
\label{sec:prelim}
\hskip0.5cm In this section, we gather some auxiliary notations and definitions that we will need in the
rest of the paper. Let $X$ and $Y$ be Banach spaces. Throughout, all Banach spaces are complex, and their dual spaces
consist of complex-linear functionals.  A linear relation $T:X\to Y$ is a mapping from a subspace $\dom T=\{x\in X:\ Tx\neq\emptyset\},$ called the domain of $T$, into the collection of nonempty subsets of $Y$ such that $$T(\alpha_1 x_1+\alpha_2 x_2)=\alpha_1T(x_1)+\alpha_2T(x_2),$$
for all nonzero scalars $\alpha_1,\alpha_2$ and all $x_1,x_2\in \dom T$.
We denote by $LR(X,Y)$ the class of all linear relations from $X$ to $Y$.
If $T$ maps each point in its domain to a singleton, then $T$ is said to be an operator. 
For the basic notions and properties of linear relations we refer to \textup{\cite{BergTW16,Cross98}}.
Given a linear relation $T\in LR(X,Y)$, we introduce the following sets
 \begin{align*}
    G(T)&=\{(x,y)\in X\times Y~:~ x\in\dom T,~ y\in Tx \},\\
    \ker T&=\{x\in X : (x,0)\in G(T)\},\\
    \ran T&=\{y\in Y : (x,y)\in G(T)\},\\
    \mul T&=\{y\in Y : (0,y)\in G(T)\}
\end{align*}
which are called the \textit{graph}, the \textit{kernel}, the \textit{range} and the \textit{multivalued} part
of $T$, respectively. We say that $T$ is \emph{injective} if $\ker(T)=\{0\}$, \emph{surjective} if $\ran(T)=Y$, and \emph{bijective} if $T$ is both injective and surjective. 
The identity relation defined on a subspace $M$ of $X$  is denoted by
$I_M$ (or simply $I$ when $M$ is understood). It is the linear relation whose graph is
$$
G(I_M)=\{(e,e)~:~ e\in M\}.
$$
The \textit{inverse} of the linear relation $T$ is given by
\begin{align*}
  G(T^{-1}):=\{(y,x)\in Y\times X : (x,y)\in G(T)\}.
\end{align*}
The linear relation $\alpha T$ with $\alpha\in\mathbb{C}$ is defined by
\begin{align}
\label{00}
G(\alpha T):=\{(x,\alpha y)\in X\times Y : (x,y)\in G(T)\}.
\end{align}
The sum of two linear relations $T,S\in LR(X,Y)$ is defined as
\begin{align}
\label{2}
G(T+S):=\{(x,y+y')\in X\times Y : (x,y)\in G(T), (x,y')\in G(S)\}.
\end{align}
If we assume that $X=Y$ then in view of (\ref{00}) and (\ref{2}) we have
\begin{align}
\label{2besser}
G(T-\lambda )=G(T-\lambda I)=\{(x,y-\lambda x):(x,y)\in G(T)\}.
\end{align}
If $M$ is a subspace of $X$ such that $M\cap \dom T\neq\emptyset$, then $T|_{M\cap \dom T}:=T|_{M}$ is defined by $$G(T|_{M}):=\{(x,y)\in G(T)~:~ x\in M\}.$$
The quotient map from $Y$ onto $Y/\cl(T(0))$ is denoted by $Q_T$. It is easy to see that $Q_TT$ is single-valued. This allows us to define the \textit{norm} of $Tx$ and of $T$, respectively, by
$$\|T x\|:=\|Q_TT x\|~~\mbox{for all}~x\in\dom  T~~\mbox{and}~\|T \|:=\|Q_TT \|.$$
Furthermore, according to \cite[Proposition II.1.4]{Cross98}, we have $$\|T x\|={\rm dist}(Tx,0)=\inf_{y\in Tx} \|y\|,~~~x\in\dom T.$$ 
A linear relation $T$ from $X$ into $Y$ is called \textit{closed} if its graph $G(T)$ is a closed subspace of $X\times Y$; the class of all closed linear relations from $X$ into $Y$ is denoted by $CR(X,Y)$. If $X=Y$, then we write $LR(X)$ and $CR(X)$ instead of $LR(X,X)$ and $CR(X,X)$, respectively. 
Let $X$ be a normed vector space. We denote by $X^{*}$ the norm dual of $X$ i.e., the space of all continuous functionals $x^*$ defined on $X$, with norm $$\|x^*\|=\sup_{\|x\|\leq 1} |x^*(x)|.$$
A consequence of Hahn--Banach theorem is the following.
\begin{thm}\label{hahnbanach}
Let $X$ be a normed vector space. Then, for all $x\in X$ and $x\neq 0$, there exists $x^*\in X^{*}$ such that $\|x^*\|=1$ and $x^*(x)=\|x\|$.
\end{thm}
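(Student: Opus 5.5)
The plan is to derive the statement from the analytic Hahn--Banach extension theorem, the version stating that a bounded linear functional on a subspace of a complex normed space extends to the whole space with the same norm. Fix $x\in X$ with $x\neq 0$ and set $M=\operatorname{span}\{x\}=\{\alpha x:\alpha\in\mathbb{C}\}$. On $M$ define
\[
f(\alpha x)=\alpha\|x\|,\qquad \alpha\in\mathbb{C}.
\]
This is well defined because $x\neq0$, so every element of $M$ determines a unique scalar $\alpha$, and $f$ is clearly complex-linear.

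Next we compute the norm of $f$ on $M$. For $\alpha\in\mathbb{C}$ we have $|f(\alpha x)|=|\alpha|\,\|x\|=\|\alpha x\|$, so $\|f\|_{M^*}=1$ and, in addition, $f(x)=\|x\|$. Applying the Hahn--Banach theorem for complex normed spaces gives an extension $x^*\in X^*$ with $x^*|_M=f$ and $\|x^*\|=\|f\|_{M^*}=1$. In particular $x^*(x)=f(x)=\|x\|$, which is exactly the assertion.

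The only delicate point is that the spaces in the paper are complex. If only the real Hahn--Banach theorem (dominated extension by a sublinear functional) is taken as known, there is one extra step. We would first extend $u=\operatorname{Re} f$, viewed as a real-linear functional dominated by the norm, to a real-linear functional $U$ on $X$ with $U\le\|\cdot\|$. We then set $x^*(y)=U(y)-iU(iy)$, which is the Bohnenblust--Sobczyk construction. Checking that $x^*$ is complex-linear, that it extends $f$, and that $|x^*(y)|\le\|y\|$ completes the argument. For the norm bound, rotate by a unimodular $\theta$ with $|x^*(y)|=x^*(\theta y)=U(\theta y)\le\|y\|$. This is routine and is the only step beyond direct verification.
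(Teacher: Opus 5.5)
Your proof is correct and follows the approach the paper intends: the paper gives no separate argument and only records the statement as a consequence of the Hahn--Banach theorem, which is what you carry out by extending the norm-one functional $\alpha x\mapsto\alpha\|x\|$ from $\operatorname{span}\{x\}$ with preservation of norm. Your reduction from the real to the complex version via $x^*(y)=U(y)-iU(iy)$ is also correct and appropriate here, since the paper's duals consist of complex-linear functionals.
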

If $M$ and $N$ are subspaces of $X$ and $X^*$, respectively, then 
$$M^{\bot}=\{x^*\in X^*~\mbox{such that}~x^*(x)=0, ~\mbox{for all}~x\in M\}$$
and $$N^{\top}=\{x\in X~\mbox{such that}~x^*(x)=0, ~\mbox{for all}~x^*\in N\}$$
\begin{defn}\cite[Definition III.1.1]{Cross98}
The \emph{adjoint} $T^\prime$ of $T$ is defined by
$$G(T^\prime):= G(-T^{-1})^{\bot}\subset Y^*\times X^*, $$
So that $(y^\prime,x^\prime)\in G(T^\prime)$ if and only if $y^\prime(y)=x^\prime(x)$ for all $(x,y)\in G(T).$
\end{defn}
To discuss the results of the classical spectral theory associated with the numerical range, we first introduce the notion of resolvent and spectrum for linear relations.
\begin{defn}\cite {GT21}
Let $X$ be a Banach space and $T\in CR(X)$. A complex number $\lambda\in\mathbb{C}$ is called a \emph{resolvent point} of $T$ if
$$
\ker(T-\lambda)=\{0\} 
\quad \text{and} \quad 
\operatorname{ran}(T-\lambda)=X.
$$
In this case, $(T-\lambda)^{-1}$ is the graph of a bounded linear operator and it is called the \emph{resolvent} of $T$ at $\lambda$. The set of all resolvent points is denoted by  $\rho(T)$. 
Consequently, it is a bounded linear operator on $X$ by the closed graph theorem. 
The \emph{spectrum} of $T$ is defined by
$\sigma(T):=\mathbb{C}\setminus\rho(T).$ 
The \emph{point spectrum} $\sigma_{\rm p}(T)$ of $T$ is defined by
$$\sigma_{\rm p}(T)
=\left\{\lambda\in\mathbb{C}:\ker(T-\lambda)\neq\{0\}\right\}.$$
The \emph{approximate point spectrum}  $\sigma_{\rm ap}(T)$ of $T$ is defined by$$
\sigma_{\rm ap}(T)=\left\{\lambda\in\mathbb{C}~:~\exists (x_n)\subset \dom(T),\ \|x_n\|=1,\ \|(T-\lambda)x_n\|\to 0 \right\}.$$
Clearly, the following holds $\sigma_{\rm p}(T)\subseteq \sigma_{\rm ap}(T)\subseteq\sigma(T)$.
\end{defn}

\section{The Banach space numerical range for a linear relation}
We first present the basic definitions of the Banach space numerical range.
\begin{defn}
Let $X$ be a Banach space and let $T\subset X\times X$ be a linear relation.
For $x\in X$, let
\[
J_X(x):=\{x^*\in X^*:\|x^*\|=\|x\|,\;x^*(x)=\|x\|^2\}
\]
denote the normalized duality mapping.
The numerical range of $T$ is defined by
\[
W(T):=\left\{\frac{x^*(y)}{\|x\|^2}:\;(x,y)\in G(T),\;x\neq0,\;x^*\in J_X(x)\right\}.
\]
\end{defn}
If $X$ is a  Hilbert space, then using the Riesz representation theorem we find that the numerical range of $T\in LR(X)$ coincides with the definition given by \cite{Rofe90}, namely
$$ W(T)=\{\langle y,x\rangle : (x,y)\in G(T),\ \|x\|=1\}.$$

\subsection{Elementary properties}
The proposition below provides conditions for which the numerical range is non-empty. This extends the corresponding result for operators from \cite{BD1} and \cite{BD} to linear relations. 
\begin{prop}
Let $T,S\in LR(X)$ and $\alpha,\beta\in \mathbb{C}$. Then
\begin{enumerate}
  \item $W(\alpha I+T)=\alpha+ W(T)$.
 \item $W(\alpha T+\beta S)\subseteq \alpha W(T)+\beta W(S)$.
  \item If $X$ is a non-trivial Banach space with $\dom T\neq\{0\}$, then $W(T)$ is non-empty.
\end{enumerate}
\end{prop}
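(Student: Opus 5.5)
All three items follow by unwinding the definition of $W$ in terms of graph elements, together with the graph descriptions \eqref{00}, \eqref{2} and \eqref{2besser}. No limiting argument is needed.

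For item 1, I will use \eqref{2besser} with $-\alpha$ in place of $\lambda$:
\[
G(\alpha I+T)=\{(x,y+\alpha x):(x,y)\in G(T)\}.
\]
Take $x\neq 0$ and $x^*\in J_X(x)$. Since $x^*(x)=\|x\|^2$, linearity of $x^*$ gives
\[
\frac{x^*(y+\alpha x)}{\|x\|^2}=\alpha+\frac{x^*(y)}{\|x\|^2}.
\]
The triples $(x,y,x^*)$ range over exactly the same set on both sides, so this identity gives both inclusions and hence equality.

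For item 2, I first describe the graph. By \eqref{00} and \eqref{2}, a pair $(x,z)$ lies in $G(\alpha T+\beta S)$ exactly when $z=\alpha y+\beta y'$ for some $(x,y)\in G(T)$ and $(x,y')\in G(S)$. Fix $x\neq0$ and $x^*\in J_X(x)$. Then
\[
\frac{x^*(z)}{\|x\|^2}=\alpha\frac{x^*(y)}{\|x\|^2}+\beta\frac{x^*(y')}{\|x\|^2}.
\]
The first quotient lies in $W(T)$ and the second in $W(S)$, because the same $x$ and the same $x^*$ serve for both. This gives the inclusion. It is only an inclusion, since the right-hand side allows different $x$ and different $x^*$ in the two terms.

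For item 3, I pick a nonzero $x\in\dom T$, which exists by hypothesis, and some $y\in Tx$, so that $(x,y)\in G(T)$. Theorem~\ref{hahnbanach} yields $f\in X^*$ with $\|f\|=1$ and $f(x)=\|x\|$. Setting $x^*=\|x\|f$ gives $\|x^*\|=\|x\|$ and $x^*(x)=\|x\|^2$, so $x^*\in J_X(x)$ and $J_X(x)\neq\emptyset$. Therefore $x^*(y)/\|x\|^2\in W(T)$.

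The only point requiring any care is this non-emptiness of $J_X(x)$, and it is settled by the Hahn--Banach consequence just cited.
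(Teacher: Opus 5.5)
Your proposal is correct and follows the paper's proof essentially line by line. Item 1 uses the graph shift $(x,y)\mapsto(x,y+\alpha x)$ together with $x^*(x)=\|x\|^2$, item 2 splits $z=\alpha y+\beta y'$ with the same $x$ and $x^*$, and item 3 obtains an element of $J_X(x)$ from Hahn--Banach; your explicit rescaling $x^*=\|x\|f$ only spells out a step the paper leaves implicit.
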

\begin{proof} 
  \begin{enumerate}
  \item Let $T\in LR(X)$, then for $(x,y)\in G(\alpha I+T)$  there exists $u\in X$ such that $y=\alpha x+u$ and 
using the linearity of $x^*$, we have
\begin{eqnarray*}
    W(\alpha I+T)&=&\left\{\alpha\frac{x^*(x)}{\|x\|^2}+\frac{x^*(u)}{\|x\|^2}: (x,u)\in G(T),\;x\neq0,\;x^*\in J_X(x)\right\}\\
    &=&\alpha+\left\{\frac{x^*(u)}{\|x\|^2}: (x,u)\in G(T),\;x\neq0,\;x^*\in J_X(x)\right\}\\
     &=&\alpha+W(T).
\end{eqnarray*}
 \item 
 Let $\lambda\in W(\alpha T+\beta S)$. Then, $\exists x\in \dom T$ and $(x,y)\in G(\alpha T+\beta S)$ satisfying 
  \begin{equation}\label{egalite}
     \lambda=\frac{x^*(y)}{\|x\|^2}~\text{such that}~x\neq0,\;x^*\in J_X(x). 
  \end{equation}
  The fact that $(x,y)\in G(\alpha T+\beta S)$ implies for \eqref{2}, that there exist $u,v\in X$ such that $(x,\alpha u)\in G(\alpha T)$ and $(x,\beta v)\in G(\beta S)$ with $\alpha u+\beta v=y$. 
  The use of \eqref{egalite} allows us to deduce that 
  \begin{eqnarray*}
      \lambda
=\frac{x^*(\alpha u)}{\|x\|^2}+\frac{x^*(\beta v)}{\|x\|^2}&=\alpha\frac{x^*(u)}{\|x\|^2}+\beta\frac{x^*(v)}{\|x\|^2}\in \alpha W(T)+\beta W(S).
  \end{eqnarray*}
 \item Since $\dom T \neq \{0\}$, there exist nonzero 
$x \in \dom T$ with $(x,y)\in G(T)$. By the Hahn--Banach theorem, there exists $x^*\in X^*$ satisfying
$\|x^*\|=\|x\|~\text{and}~x^*(x)=\|x\|^2,$ that is, $x^*\in J_X(x)$. Therefore, by the definition of $W(T)$, we have $\tfrac{x^*(y)}{\|x\|^2}\in W(T)$. Consequently, $W(T)\neq\emptyset$.
\end{enumerate}  
\end{proof}

The following theorem provides some conditions, when $W(T)=\mathbb{C}$ is possible and it is an extension of \cite{Rofe90} to the Banach space setting.
\begin{thm}
\label{closed and bounded}
Let $T\in LR(X)$. Then the following assertions hold.
\begin{enumerate}
    \item If there exist $y_0\in\mul T$, $x_0\in\dom T\setminus\{0\}$, and $x_0^*\in J_X(x_0)$
    such that $x_0^*(y_0)\neq0$, then
 $W(T)=\mathbb{C}$.

    \item If $\dim(\dom T)<\infty$,
    then either
$W(T)=\mathbb{C}$ 
    or $W(T)$ is compact.
\end{enumerate}
\end{thm}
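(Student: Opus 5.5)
For part (1), the plan is to add a scaled multivalued part to a fixed graph element. Since $x_0\in\dom T$, pick $y$ with $(x_0,y)\in G(T)$. Because $y_0\in\mul T$, we have $(0,y_0)\in G(T)$, and by linearity of the graph $(x_0,y+ty_0)\in G(T)$ for every $t\in\mathbb{C}$. With the fixed functional $x_0^*\in J_X(x_0)$ this gives
\[
\frac{x_0^*(y+ty_0)}{\|x_0\|^2}=\frac{x_0^*(y)}{\|x_0\|^2}+t\,\frac{x_0^*(y_0)}{\|x_0\|^2}\in W(T).
\]
Since $x_0^*(y_0)\neq0$, the map $t\mapsto$ (this expression) is a nonconstant affine map $\mathbb{C}\to\mathbb{C}$, hence surjective. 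Given $\mu\in\mathbb{C}$, choose
\[
t=\bigl(\mu\|x_0\|^2-x_0^*(y)\bigr)/x_0^*(y_0).
\]
Then $\mu\in W(T)$, so $W(T)=\mathbb{C}$.

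For part (2), I split according to whether the hypothesis of (1) holds. If some $y_0\in\mul T$, $x_0\in\dom T\setminus\{0\}$ and $x_0^*\in J_X(x_0)$ satisfy $x_0^*(y_0)\neq0$, then $W(T)=\mathbb{C}$ by (1). Otherwise, $x^*(y_0)=0$ for all $y_0\in\mul T$, all $x\in\dom T\setminus\{0\}$ and all $x^*\in J_X(x)$. In that case the value $x^*(y)/\|x\|^2$ does not depend on the choice of $y\in Tx$, because two choices differ by an element of $\mul T$. Set $D=\dom T$, which is finite dimensional, and choose a linear operator $A:D\to X$ with $(x,Ax)\in G(T)$. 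Such an $A$ exists by defining it on a basis of $D$ and extending linearly. Then
\[
W(T)=\{x^*(Ax)/\|x\|^2: x\in D\setminus\{0\},\ x^*\in J_X(x)\}.
\]
By homogeneity (replacing $x$ by $x/\|x\|$ and $x^*$ by $x^*/\|x\|$) we may restrict to $\|x\|=1$, giving $W(T)=\{x^*(Ax): x\in S_D,\ x^*\in J_X(x)\}$, where $S_D$ is the unit sphere of $D$.

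The main obstacle is compactness, since $J_X$ ranges in the infinite-dimensional dual $X^*$. I handle it as follows.
\begin{enumerate}
\item $W(T)$ is bounded: $A$ is a linear map on a finite-dimensional space, hence bounded, so $|x^*(Ax)|\le\|A\|$.
\item $W(T)$ is closed. Take $x_n\in S_D$ and $x_n^*\in J_X(x_n)$ with $x_n^*(Ax_n)\to\mu$. Since $S_D$ is compact, pass to a subsequence with $x_n\to x\in S_D$.
\item The $x_n^*$ lie in the unit ball of $X^*$. By Banach--Alaoglu they have a weak$^*$ cluster point $x^*$, taken along a subnet, because without separability sequential extraction is unavailable.
\item Identify the limit. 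Weak$^*$ lower semicontinuity of the norm gives $\|x^*\|\le1$. Also $x_n^*(x_n)-x_n^*(x)\to0$ by norm convergence, so $x^*(x)=\lim x_n^*(x_n)=1$, which forces $\|x^*\|=1$ and hence $x^*\in J_X(x)$.
\item Pass to the limit in the value. We have $x_n^*(Ax_n)-x_n^*(Ax)\to0$ because $\|A\|<\infty$, and $x_n^*(Ax)\to x^*(Ax)$ along the subnet. Thus $\mu=x^*(Ax)\in W(T)$.
\end{enumerate}
Hence $W(T)$ is closed and bounded, so it is compact.

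An alternative for the last step is to restrict functionals to the finite-dimensional space $D+AD$. This avoids nets but needs extra care with norms, so the Banach--Alaoglu route is the intended one.
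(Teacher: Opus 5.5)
Your proposal is correct and matches the paper's proof essentially step for step: part (1) is the same affine-map argument, and part (2) uses the same case split, the same linear selection $A$ with $Tx=Ax+\mul T$, and the same reduction to the unit sphere $S_D$ using compactness of $S_D$ and Banach--Alaoglu. The only difference is packaging: the paper writes $W(T)=\Phi(K)$ with $K=\{(x,x^*)\in S_D\times B_{X^*}: x^*(x)=1\}$ compact and $\Phi(x,x^*)=x^*(Ax)$ continuous, whereas you check boundedness and closedness directly via a subnet, which is the same argument.
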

\begin{proof}
\begin{enumerate}
\item
Let $y_0\in\mul T$,
$x_0\in\dom T\setminus\{0\}$, $x_0^*\in J_X(x_0)$ 
and suppose that $x_0^*(y_0)\neq0$.
Choose an element $y_1\in Tx_0$. Since $y_0\in\mul T$, we have $(0,y_0)\in G(T)$. Since $G(T)$ is a linear subspace, it follows that
\[
    (x_0,y_1+\mu y_0)\in G(T)
    \qquad
    \text{for every }\mu\in\mathbb{C}.
\]
Consequently,
\[
    \frac{x_0^*(y_1+\mu y_0)}{\|x_0\|^2}
    =
    \frac{x_0^*(y_1)}{\|x_0\|^2}
    +
    \mu\frac{x_0^*(y_0)}{\|x_0\|^2}
    \in W(T).
\]
Since $\tfrac{x_0^*(y_0)}{\|x_0\|^2}\neq0$,
the affine mapping
\[
    \mu
    \mapsto
    \frac{x_0^*(y_1)}{\|x_0\|^2}
    +
    \mu\frac{x_0^*(y_0)}{\|x_0\|^2}
\]
maps $\mathbb{C}$ onto $\mathbb{C}$. Hence, 
    $W(T)=\mathbb{C}$.
\item
Set $D:=\dom T$ and $M:=\mul T$. If $D=\{0\}$, then $W(T)=\emptyset$, which is compact. We may therefore assume that $D\neq\{0\}$. Since $D$ is finite-dimensional, we can choose a linear map $A_0:D\longrightarrow X$ such that
\[
    A_0x\in Tx
    \qquad
    \text{for every }x\in D.
\]
For example, choose a basis $d_1,\ldots,d_n$ of $D$, select
$y_j\in Td_j$, and define
\[
    A_0\left(\sum_{j=1}^n\alpha_jd_j\right)
    :=
    \sum_{j=1}^n\alpha_jy_j.
\]
Because $D$ is finite-dimensional, the linear map $A_0$ is
bounded.

For every $x\in D$, we have $Tx=A_0x+M$. Indeed, if $y\in Tx$, then
\[
    (x,y)-(x,A_0x)=(0,y-A_0x)\in G(T),
\]
and hence $y-A_0x\in M$. Conversely, if $m\in M$, then
\[
    (x,A_0x)+(0,m)=(x,A_0x+m)\in G(T).
\]

We distinguish two cases.

\medskip
\noindent
\textbf{Case 1.}
Suppose that there exist $x\in D\setminus\{0\}$,
    $x^*\in J_X(x)$, $m\in M$ such that $x^*(m)\neq0$. Then 1. implies that $W(T)=\mathbb{C}$.\\
\noindent
\textbf{Case 2.}
Suppose that $x^*(m)=0$ for every 
$x\in D\setminus\{0\}$,
$x^*\in J_X(x)$, and 
$m\in M$. If $y\in Tx$, then $y=A_0x+m$ for some $m\in M$, and therefore
\[
    x^*(y)=x^*(A_0x).
\]
By homogeneity, it is sufficient to consider vectors of norm one.
Let $S_D:=\{x\in D:\|x\|=1\}$. Then
\[
    W(T)
    =
    \left\{
        x^*(A_0x):
        x\in S_D,\;
        x^*\in J_X(x)
    \right\}.
\]
Equip $S_D$ with the norm topology and the closed dual unit ball
\[
    B_{X^*}:=\{x^*\in X^*:\|x^*\|\leq1\}
\]
with the weak-* topology. Since $D$ is finite-dimensional,
$S_D$ is compact. By the Banach--Alaoglu theorem,
$B_{X^*}$ is weak-* compact. Consider the set
\[
    K
    :=
    \left\{
        (x,x^*)\in S_D\times B_{X^*}:
        x^*(x)=1
    \right\}.
\]
For $x\in S_D$, the conditions $x^*\in B_{X^*}$,
$x^*(x)=1$ imply $\|x^*\|=1$, 
and hence $x^*\in J_X(x)$.
Therefore, $K
    =
    \left\{
        (x,x^*):
        x\in S_D,\;
        x^*\in J_X(x)
    \right\}$. 
The mapping $(x,x^*)\mapsto x^*(x)$ is continuous on $S_D\times B_{X^*}$ with respect to the norm
topology in the first component and the weak-* topology in the
second component. Hence, $K$ is a closed subset of the compact
space $S_D\times B_{X^*}$, and is therefore compact. Finally, consider
\[
    \Phi:K\rightarrow\mathbb{C},
    \qquad
    \Phi(x,x^*)=x^*(A_0x).
\]
Since $A_0$ is bounded, the mapping $\Phi$ is continuous for the
same product topology. Indeed, if $x_\alpha\rightarrow x$
in norm and $x_\alpha^*\rightarrow x^*$ in the weak-* topology, then
\begin{align*}
    \left|
        x_\alpha^*(A_0x_\alpha)-x^*(A_0x)
    \right|
    &\leq
    \left|
        x_\alpha^*(A_0(x_\alpha-x))
    \right|
    +
    \left|
        (x_\alpha^*-x^*)(A_0x)
    \right|
    \\
    &\leq
    \|A_0(x_\alpha-x)\|
    +
    \left|
        (x_\alpha^*-x^*)(A_0x)
    \right|
    \longrightarrow0.
\end{align*}
Consequently, $ W(T)=\Phi(K)$
is the continuous image of a compact set and is therefore compact. 
Combining the two cases, we conclude that either
$W(T)=\mathbb{C}$
or $W(T)$ is compact.
\end{enumerate}
\end{proof}

\subsection{Spectral inclusion}
The following theorem is one of the main results of this paper, where we show that the numerical range has the spectral inclusion property. 
\begin{thm}\label{theorem1}
Let $T\in CR(X)$. Then
\begin{enumerate}
  \item $\sigma_{\rm p}(T)\subseteq W(T)$;
  \item $\sigma_{\rm ap}(T)\subseteq \cl(W(T))$;
  \item $\sigma(T)\subseteq \cl(W(T))\cup W(T')$. 
\end{enumerate}
\end{thm}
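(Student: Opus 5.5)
The plan is to prove the three inclusions in order, each reusing the previous one. Throughout I use that $(x,y)\in G(T-\lambda)$ if and only if $(x,y+\lambda x)\in G(T)$, by \eqref{2besser}, together with the identity
\[
\frac{x^*(y)}{\|x\|^2}-\lambda=\frac{x^*(y-\lambda x)}{\|x\|^2}\qquad\text{for } x^*\in J_X(x),
\]
which holds because $x^*(x)=\|x\|^2$.

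\emph{Part 1.} Let $\lambda\in\sigma_{\rm p}(T)$ and pick $x\neq0$ with $(x,0)\in G(T-\lambda)$, i.e.\ $(x,\lambda x)\in G(T)$. The Hahn--Banach theorem (Theorem~\ref{hahnbanach}, rescaled) gives some $x^*\in J_X(x)$. Then $x^*(\lambda x)/\|x\|^2=\lambda$, so $\lambda\in W(T)$.

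\emph{Part 2.} Let $\lambda\in\sigma_{\rm ap}(T)$ with $\|x_n\|=1$ and $\|(T-\lambda)x_n\|\to0$. Since $\|(T-\lambda)x_n\|=\dist((T-\lambda)x_n,0)$, I can choose $y_n\in Tx_n$ with $\|y_n-\lambda x_n\|\to0$. Taking $x_n^*\in J_X(x_n)$, the identity above gives
\[
\bigl|x_n^*(y_n)-\lambda\bigr|=\bigl|x_n^*(y_n-\lambda x_n)\bigr|\le\|y_n-\lambda x_n\|\to0 .
\]
Hence $\lambda\in\cl(W(T))$.

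\emph{Part 3.} Let $\lambda\notin\cl(W(T))\cup W(T')$ and put $d:=\dist(\lambda,W(T))>0$. I first show $T-\lambda$ is bounded below. For every $(x,y)\in G(T)$ with $x\neq0$ and any $x^*\in J_X(x)$, the identity gives
\[
d\le\frac{|x^*(y-\lambda x)|}{\|x\|^2}\le\frac{\|y-\lambda x\|}{\|x\|},
\]
so $\|y-\lambda x\|\ge d\|x\|$ for all such pairs. Taking $x=0$ would give $\mul(T-\lambda)=\{0\}$ in the same way, but in any case injectivity is immediate: if $(x,0)\in G(T-\lambda)$ then $0\ge d\|x\|$, so $x=0$.

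Next I show $\ran(T-\lambda)$ is closed. Suppose $y_n-\lambda x_n\to z$ with $(x_n,y_n)\in G(T)$. The lower bound applied to differences, which again lie in $G(T)$ by linearity, makes $(x_n)$ Cauchy, so $x_n\to x$. Then $y_n\to z+\lambda x$, and closedness of $G(T)$ gives $(x,z+\lambda x)\in G(T)$, i.e.\ $z\in\ran(T-\lambda)$. This is the one place where the multivalued nature needs care: the estimate must hold for arbitrary representatives $y\in Tx$, not just a distance, and it does.

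It remains to show density of the range. Suppose $\ran(T-\lambda)\neq X$. Since the range is closed, Hahn--Banach gives $y^*\neq0$ annihilating it. Then $y^*(y)=\lambda y^*(x)=(\lambda y^*)(x)$ for every $(x,y)\in G(T)$, which by the definition of the adjoint means $(y^*,\lambda y^*)\in G(T')$. Choosing $\varphi\in J_{X^*}(y^*)\subset X^{**}$, which exists again by Theorem~\ref{hahnbanach}, I get
\[
\frac{\varphi(\lambda y^*)}{\|y^*\|^2}=\lambda\in W(T'),
\]
a contradiction. Therefore $\ker(T-\lambda)=\{0\}$ and $\ran(T-\lambda)=X$, so $\lambda\in\rho(T)$.

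I expect the main obstacle to be the closed-range step. It relies on the uniform lower bound holding for every representative $y\in Tx$, and on closedness of $G(T)$ rather than of an operator part.
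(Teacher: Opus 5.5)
Your proof is correct and follows the paper's argument. Parts 1 and 2 coincide with the paper's. In part 3 you get the same lower bound (directly from $\dist(\lambda,W(T))$ rather than via part 2), prove the closed-range step by hand where the paper cites Cross, and arrive at the same eigenpair $(y^*,\lambda y^*)\in G(T')$, with part 1 for $T'$ written out explicitly.

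One correction to an unused aside: the estimate says nothing when $x=0$. Moreover, $\mul(T-\lambda)=\mul T$ can be nontrivial even when $\lambda\notin\cl(W(T))\cup W(T')$ (e.g.\ $\lambda\neq a$ in the pencil example of Section~4), so the claim $\mul(T-\lambda)=\{0\}$ should be dropped.
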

\begin{proof}
\begin{enumerate}
   \item Let $\lambda\in\sigma_{\rm p}(T)$. Then, there exist $x\neq 0$ and
$x^*\in J_X(x)$ such that $(x,0)\in G(T-\lambda)$.\\ 
   Since, $(x,0)\in G(T-\lambda)$ implies that there exists $y\in X$ such that
   $(x,0)=(x,y-\lambda x)$ satisfying $(x,y)\in G(T)$.\\
    therefore 
\[
\frac{x^*(y)}{\|x\|^2}
=\frac{x^*(\lambda x)}{\|x\|^2}
=\lambda\frac{x^*(x)}{\|x\|^2}
=\lambda.
\]
This implies that $\lambda\in W(T).$\\ 
   \item Let $\lambda\in\sigma_{\rm ap}(T)$. Then there exists
$(x_n)\subseteq\dom T$ such that
\[
    \|x_n\|=1,
    \qquad
    \|(T-\lambda I)x_n\|\rightarrow0.
\]
For every $n$, choose $y_n\in Tx_n$ such that
\[
    \|y_n-\lambda x_n\|
    \le
    \|(T-\lambda I)x_n\|+\frac1n.
\]
Choose $x_n^*\in J_X(x_n)$. Then
\[
\left|
    x_n^*(y_n)-\lambda
\right|
\le
\|y_n-\lambda x_n\|
\rightarrow0.
\]   
Consequently, $\lambda\in \cl(W(T))$.
   \item 
   Let $\lambda\in\sigma(T)\setminus\overline{W(T)}$. By 2.,
$\lambda\notin\sigma_{\rm ap}(T)$. Hence, $T-\lambda I$ is
bounded below, i.e.\ there exists $c>0$ such that 
$\|(T-\lambda)x\|\geq c\|x\|$ for all $x\in\dom T$. Since $T-\lambda I$ is closed, by
\cite[Theorem~III.4.2(b)]{Cross98}, $\ran(T-\lambda)$ is closed. 
Moreover, $T-\lambda I$ is injective. Since
$\lambda\in\sigma(T)$, the range of $T-\lambda I$ is a proper
closed subspace of $X$. By the Hahn--Banach theorem, there exists $0\neq f\in X^*$ such
that
\[
    f(y-\lambda x)=0
    \qquad
    \text{for all }(x,y)\in G(T).
\]
Thus,
\[
    f(y)=\lambda f(x)
    \qquad
    \text{for all }(x,y)\in G(T),
\]
and hence $(f,\lambda f)\in G(T')$. Therefore, $\lambda\in\sigma_{\rm p}(T')\subseteq W(T')$.
\end{enumerate}
\end{proof}
In the following, we present an example that the numerical range of adjoint in Theorem~\ref{theorem1} is necessary.
\begin{ex}
Let $X=\mathbb{C}^2$
with the Euclidean norm, and let
$e_1=\left(
    \begin{smallmatrix}
        1\\0
    \end{smallmatrix}\right)$, $e_2=\left(
    \begin{smallmatrix}
        0\\1
    \end{smallmatrix}\right)$.
Consider the linear relation $T$ defined by
\[
    G(T)
    =
    \left\{
        \left(\alpha e_1,0\right):
        \alpha\in\mathbb{C}
    \right\}.
\]
Thus, $T$ is the zero operator with proper domain $\dom T=\operatorname{span}\{e_1\}$. 
In particular, $T$ is closed and $\mul T=\{0\}$. 
For every $(x,y)\in G(T)$, we have $y=0$. Hence, for every
$x\in\dom T\setminus\{0\}$ and every $x^*\in J_X(x)$, $\tfrac{x^*(y)}{\|x\|^2}=0$. Consequently, $W(T)=\{0\}$. We next determine the spectrum of $T$. Since
\[
    \ker T=\operatorname{span}\{e_1\}\neq\{0\},
\]
we have $0\in\sigma_{\rm p}(T)$. For $\lambda\neq0$,
\[
    G(T-\lambda I)
    =
    \left\{
        \left(\alpha e_1,-\lambda\alpha e_1\right):
        \alpha\in\mathbb{C}
    \right\}.
\]
Thus, $T-\lambda I$ is injective, but
$\ran(T-\lambda I)
    =
    \operatorname{span}\{e_1\}
    \neq X$. Therefore, $\lambda\notin\rho(T)$ for every $\lambda\neq0$, and
hence $\sigma(T)=\mathbb{C}$.

It remains to compute the Banach adjoint relation. Let
$e_1^*,e_2^*\in X^*$ denote the coordinate functionals,
\[
    e_1^*(z_1,z_2)=z_1,
    \qquad
    e_2^*(z_1,z_2)=z_2.
\]
By the definition of the Banach adjoint, a pair
$(f,g)\in X^*\times X^*$ belongs to $G(T')$ if and only if $ f(y)=g(x)$
for all  $(x,y)\in G(T)$. Since every element of $G(T)$ has the form $(\alpha e_1,0)$,
this condition is equivalent to
\[
    0=g(\alpha e_1)
    \qquad
    \text{for every }\alpha\in\mathbb{C},
\]
or, equivalently, $g(e_1)=0$. Therefore,
\[
    G(T')
    =
    \left\{
        (f,g)\in X^*\times X^*:
        g\in\operatorname{span}\{e_2^*\}
    \right\}
    =
    X^*\times\operatorname{span}\{e_2^*\}.
\]
In particular,
\[
    \dom T'=X^*,
    \qquad
    \mul T'=\operatorname{span}\{e_2^*\}.
\]

We now show that $W(T')=\mathbb{C}$. Let $\mu\in\mathbb{C}$ be arbitrary and set $f=e_2^*$,$g=\mu e_2^*$. Then $(f,g)\in G(T')$. Let
\[
    \widehat{e}_2\in X^{**},
    \qquad
    \widehat{e}_2(h)=h(e_2),
    \quad h\in X^*,
\]
be the canonical image of $e_2$ in $X^{**}$. Since
\[
    \|f\|=\|\widehat{e}_2\|=1,
    \qquad
    \widehat{e}_2(f)=e_2^*(e_2)=1,
\]
we have $\widehat{e}_2\in J_{X^*}(f)$. Consequently,
\[
    \frac{\widehat{e}_2(g)}{\|f\|^2}
    =
    \widehat{e}_2(\mu e_2^*)
    =
    \mu.
\]
Since $\mu\in\mathbb{C}$ was arbitrary, it follows that $W(T')=\mathbb{C}$. We therefore obtain
\[
    \overline{W(T)}
    =
    \{0\}
    \neq
    \sigma(T)
    =
    \mathbb{C},
\]
whereas
\[
    \overline{W(T)}\cup W(T')
    =
    \mathbb{C}
    =
    \sigma(T).
\]
\end{ex}
\begin{rem}
Since $X^*$ denotes the space of complex-linear continuous
functionals, for the Banach adjoint relation one has $(T-\lambda I)'=T'-\lambda I$. 
Thus, no complex conjugation occurs in the spectral inclusion.

If $X$ is a Hilbert space with an inner product that is linear in the first entry and $T^*$ denotes the Hilbert-space
adjoint, then
\[
    (T-\lambda I)^*=T^*-\overline{\lambda}I.
\]
Under the conjugate-linear Riesz identification of $X$ with $X^*$,
\[
    W_{X^*}(T')
    =
    \{\overline{z}:z\in W_X(T^*)\}.
\]
Hence, in terms of the Hilbert-space adjoint, the corresponding
spectral enclosure is
\[
    \sigma(T)
    \subseteq
    \overline{W_X(T)}
    \cup
    \{\overline{z}:z\in W_X(T^*)\}.
\]
\end{rem}
\begin{rem}
\label{rem:bounded}
For bounded operators on Banach spaces, there is a close relationship between the numerical range of an operator $T$ and that of its Banach adjoint $T'$
\[
    W(T)
    \subseteq
    W(T')
    \subseteq
    \cl(W(T)),
\]
see, for example, \cite[Corollary 3]{BD73}.  
In particular, $\cl(W(T))=\cl(W(T'))$, see \cite[Theorem 2]{Bollobas1970}. Moreover, if $X$ is reflexive, then 
$W(T)=W(T')$, see  \cite[Proposition~2.6]{Jahedi2012}. These assertions do not extend to linear relations in general. Indeed, for the linear relation $T=X\times X$, one has
$W(T)=\mathbb C$, whereas
$T'=\{(0,0)\}$ and hence $W(T')=\emptyset$.
\end{rem}

\begin{cor}
Let $T\in CR(X)$ and $\dom T=X$. Then
\[
    \sigma(T)\subseteq\cl(W(T)).
\]
\end{cor}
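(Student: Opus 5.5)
By Theorem~\ref{theorem1}(3) it suffices to show that every $\lambda\in\sigma(T)\setminus\cl(W(T))$ leads to a contradiction. Equivalently, in the situation of the proof of Theorem~\ref{theorem1}(3), we must rule out $\lambda\in\sigma_{\rm p}(T')$ when $\dom T=X$. We will not use all of $W(T')$; we only show that the eigenvector $f$ produced in that proof must vanish.

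Fix $\lambda\in\sigma(T)\setminus\cl(W(T))$ and repeat the first steps of the proof of Theorem~\ref{theorem1}(3). By Theorem~\ref{theorem1}(2), $T-\lambda$ is bounded below. Since $T-\lambda$ is closed, its range is closed by \cite[Theorem~III.4.2(b)]{Cross98}. Because $\lambda\in\sigma(T)$, this range is a proper subspace, so Hahn--Banach gives $0\neq f\in X^*$ with $f(y)=\lambda f(x)$ for all $(x,y)\in G(T)$. Taking $x=0$ shows $f$ annihilates $\mul T$.

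The key step is the full-domain hypothesis. Since $\dom T=X$, we may take any $x$ with $\|x\|=1$ such that $f(x)\neq 0$, and pick $x^*\in J_X(x)$. We would like to compare $x^*$ with $f$, but they are unrelated, so a different route is needed. Choose instead $x_0$ with $\|x_0\|=1$ and $|f(x_0)|$ close to $\|f\|$, and aim for a Bishop--Phelps--Bollobás type argument producing $x^*\in J_X(x)$ close to $f/\|f\|$. This is the main obstacle, and it is cleaner to avoid it.

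The cleaner route uses the known bounded operator result. Write $A=Q_TT$, an everywhere defined closed operator $X\to X/\cl(T(0))$. Since $T$ is closed, $\mul T$ is closed, and by the closed graph theorem $A$ is bounded. If $\mul T$ is nontrivial, we first try to show $W(T)=\mathbb C$ via Theorem~\ref{closed and bounded}(1). If some $x^*\in J_X(x)$ does not annihilate $\mul T$, then $\cl(W(T))=\mathbb C$ and there is nothing to prove. Otherwise every duality functional annihilates $\mul T$. By considering $x\in\mul T$ itself, we get $x^*(x)=\|x\|^2=0$, so $\mul T=\{0\}$. Hence $T$ is a bounded everywhere defined operator. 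For such $T$, the spectrum satisfies $\sigma(T)=\sigma(T')$ and $\sigma(T')\subseteq\cl(W(T'))=\cl(W(T))$ by Remark~\ref{rem:bounded}. Alternatively, by \cite[Corollary 3]{BD73}, $W(T')\subseteq\cl(W(T))$ directly. Substituting into Theorem~\ref{theorem1}(3) yields $\sigma(T)\subseteq\cl(W(T))$.

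One detail needs checking: that $\dom T=X\neq\{0\}$ so that elements of $\mul T$ lie in $\dom T$, which holds trivially. The main step to verify carefully is the reduction to $\mul T=\{0\}$ via Theorem~\ref{closed and bounded}(1).
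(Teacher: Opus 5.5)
Your final argument is correct and is essentially the paper's proof. You split on whether $\mul T$ is trivial. If it is not, you apply Theorem~\ref{closed and bounded}(1) with $x_0=y_0\in\mul T\subseteq X=\dom T$ to get $W(T)=\mathbb{C}$; this choice of $x_0$ is left implicit in the paper and you make it explicit. If it is, you combine the closed graph theorem, the inclusion $W(T')\subseteq\cl(W(T))$ from \cite[Corollary 3]{BD73}, and Theorem~\ref{theorem1}(3). The exploratory opening about forcing $f$ to vanish via a Bishop--Phelps--Bollob\'as argument, and the detour through $Q_TT$, are unnecessary and can be dropped.
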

\begin{proof}
    If $\mul T\neq\{0\}$, then we obtain from Theorem~\ref{closed and bounded} that $W(T)=\mathbb{C}$ and therefore the stated inclusion holds trivially. If  $\mul T=\{0\}$ then $T$ is the graph of a bounded operator and hence, from Remark~\ref{rem:bounded} and Theorem~\ref{theorem1} we obtain the stated inclusion. 
\end{proof}

\subsection{Resolvent estimates}
The resolvent family of a~linear relation is a~pseudo-resolvent and estimates on these resolvents are useful to solve differential-algebraic equations~\cite{Gernandt2025,JacobMorris2022}. We now establish a resolvent estimate in terms of the numerical range in the Banach space setting, extending the classical Hilbert space result of Kato~\cite[Theorem V.3.2]{Kato95}.
\begin{thm}
Let $X$ be a Banach space and let $T\in CR(X)$. Assume that $\lambda \in\rho(T)\setminus\cl(W(T))$. Then
\begin{align}
\label{eq:resolvent_estim}
\|(T-\lambda)^{-1}\|\le \frac{1}{\dist(\lambda,W(T))}.
\end{align}
\end{thm}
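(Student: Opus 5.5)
The plan is to reproduce Kato's lower-bound argument pointwise on the graph of $T$, and then convert it into a norm bound for the resolvent. Set $d:=\dist(\lambda,W(T))$, which is positive because $\lambda\notin\cl(W(T))$. The key inequality I want is
\[
\|y-\lambda x\|\ \ge\ d\,\|x\|\qquad\text{for all }(x,y)\in G(T).
\]

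To prove it, I take $(x,y)\in G(T)$ with $x\neq 0$ and pick $x^*\in J_X(x)$, which is possible by Theorem~\ref{hahnbanach}. By definition $\mu:=x^*(y)/\|x\|^2\in W(T)$, and hence $|\mu-\lambda|\ge d$. Since $x^*(x)=\|x\|^2$, linearity gives
\[
x^*(y-\lambda x)=\|x\|^2(\mu-\lambda),
\]
and $\|x^*\|=\|x\|$ yields
\[
\|x\|^2 d\le |x^*(y-\lambda x)|\le \|x\|\,\|y-\lambda x\|.
\]
Dividing by $\|x\|$ gives the inequality. For $x=0$ it is trivial.

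Next I translate this into the resolvent bound. Because $\lambda\in\rho(T)$, the relation $(T-\lambda)^{-1}$ is an everywhere defined bounded operator. By \eqref{2besser}, its graph consists of the pairs $(y-\lambda x,x)$ with $(x,y)\in G(T)$. Now let $z\in X$ and set $x=(T-\lambda)^{-1}z$. Then $(x,z)\in G(T-\lambda)$, so $z=y-\lambda x$ for some $y$ with $(x,y)\in G(T)$. The key inequality then gives $\|z\|\ge d\|(T-\lambda)^{-1}z\|$. Taking the supremum over $\|z\|\le 1$ yields \eqref{eq:resolvent_estim}.

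The one point needing care is the multivalued norm. The norm $\|(T-\lambda)^{-1}\|$ is defined through $Q_{(T-\lambda)^{-1}}$ as $\inf$ over the image. Since $\lambda\in\rho(T)$ means $\ker(T-\lambda)=\{0\}$, the multivalued part $\mul (T-\lambda)^{-1}=\ker(T-\lambda)$ is trivial. So the quotient map is the identity and this norm is the usual operator norm. I must also make sure to use an arbitrary $y$ in the fibre $Tx$ rather than a fixed selection. The argument above already handles every pair in the graph, so it applies to all such $y$. I do not expect a genuine obstacle: the estimate is essentially the Kato argument, with the duality map replacing the inner product.
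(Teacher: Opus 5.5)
Your proposal is correct and follows essentially the same route as the paper: you first prove the pointwise duality estimate $\|y-\lambda x\|\ge \dist(\lambda,W(T))\,\|x\|$ on $G(T)$, and then write $z=u-\lambda x$ with $u\in Tx$ for $x=(T-\lambda)^{-1}z$. Your extra remark that $\mul (T-\lambda)^{-1}=\ker(T-\lambda)=\{0\}$, so that the relation norm is the ordinary operator norm, is a detail the paper leaves implicit.
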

\begin{proof}
Since $\lambda \in \rho(T)$ then $T-\lambda $ is bijective and its inverse $(T-\lambda)^{-1}$ is bounded. 
Let $(x,y)\in G(T)$ with $x\neq0$ and  $x^*\in J_X(x)$. By the definition of $W(T)$, we have
$\tfrac{x^*(y)}{\|x\|^2}\in W(T)$ and using the definition of the distance from $\lambda$ to $W(T)$ it follows that
\begin{align}
\label{eq:distance_estimate}   
\frac{\|y-\lambda x\|}{\|x\|}= \frac{\|x^*\|\,\|y-\lambda x\|}{\|x\|^2} \geq \left|\frac{x^*(y-\lambda x)}{\|x\|^2}\right|=\left|
\frac{x^*(y)}{\|x\|^2}-\lambda
\right|\geq \dist(\lambda, W(T)).
\end{align}
 Hence, by taking the infimum over all such $y$ in \eqref{eq:distance_estimate},  we obtain 
 \begin{equation}\label{estim}
  \|(T-\lambda)x\|=\inf_{y\in Tx}\|y-\lambda x\|\geq \dist(\lambda, W(T))\,\|x\|\qquad x\in\dom(T)   
 \end{equation}
Now let $z\in X$ and set $x=(T-\lambda)^{-1}z$.
Since $\lambda\in\rho(T)$, the relation $T-\lambda$ is injective. Hence, 
$(T-\lambda)^{-1}$ is single-valued. Since $x=(T-\lambda)^{-1}z,$ we have
$(z,x)\in G((T-\lambda)^{-1})$.
By definition of the inverse relation,
$(x,z)\in G(T-\lambda)$.  So, from \eqref{2besser}, there exists $u\in T(x)$ such that $z=u-\lambda x$. Therefore, $\|z\|=\|u-\lambda x\|$.  Applying  \eqref{estim}, we obtain  \[
\|z\|\geq \dist(\lambda,W(T))\,\|x\|.
\]
Using $x=(T-\lambda)^{-1}z$, we obtain
\[
\|(T-\lambda)^{-1}z\|
=\|x\|
\leq \frac{1}{\dist(\lambda,W(T))}\,\|z\|.
\]
For all $z$,  we conclude \eqref{eq:resolvent_estim}.
 \end{proof}
 \begin{rem}
A linear relation $T\subseteq X\times X$ is called
\emph{dissipative} if, for every $(x,y)\in G(T)$, there exists
$x^*\in J_X(x)$ such that
\[
    \operatorname{Re}x^*(y)\leq0.
\]
It is called \emph{m-dissipative} if it is dissipative and
$\operatorname{ran}(\lambda I-T)=X$ for some $\lambda>0$.
In this case,
\[
    (0,\infty)\subseteq\rho(T),
    \qquad
    \|(\lambda I-T)^{-1}\|\leq\frac1\lambda,
    \qquad \lambda>0,
\]
\cite[Proposition~4.3]{ArendtChalendarMoletsane2024}.
In general, this estimate cannot be recovered directly from
$\operatorname{dist}(\lambda,W(T))$, because dissipativity requires
the existence of one norming functional for each graph element,
whereas $W(T)$ contains the values associated with all norming
functionals.
\end{rem}

\section{Application to spectral inclusions for operator pencils}
In this section, we apply the spectral inclusion for linear relations obtained in Theorem~\ref{theorem1} to obtain spectral inclusions for operator pencils $\mathcal{A}(\lambda)=\lambda E-A$, where $E:X\rightarrow X$ is a bounded operator in the Banach space $X$ and $A:X\supseteq\dom A\rightarrow X$ is closed and densely defined. Recall that the spectrum and the point spectrum of an operator pencil $\mathcal{A}(\lambda)=\lambda E-A$ are given by 
\begin{align*}
\sigma(\mathcal{A})&=\{\lambda\in\mathbb{C} ~:~ \lambda E-A \text{ not bijective}\},\\
\sigma_{\rm p}(\mathcal{A})&=\{\lambda\in\mathbb{C} ~:~ \ker (\lambda E-A)\neq \{0\}\}.
\end{align*}
Furthermore, we assume throughout this section that the resolvent set $\rho(\mathcal{A}):=\mathbb{C}\setminus\sigma(\mathcal{A})$ is not empty.

In this framework, one associates with a pencil 
$\mathcal{A}(\lambda) = \lambda E - A$ the range representation
\[
T = \mathrm{ran}
\begin{bmatrix}
E \\
A
\end{bmatrix}=\{(Ex,Ax)~:~ x\in\dom A\},
\]
which enables a reduction of questions on linear relations to the analysis of operator pencils. It was shown in \cite[Proposition 3.4]{GT21} that 
\[
\sigma_{\rm p}(\mathcal{A})=\sigma_{\rm p}(T),\quad \sigma(\mathcal{A})=\sigma(T).
\]
Furthermore, since $\rho(\mathcal{A})\neq\emptyset$ we have that $\rho(T)\neq\emptyset$ and therefore $T$ defines a closed linear relation.

Hence, as a combination of  \cite[Proposition 3.4]{GT21} and Theorem~\ref{theorem1}, we obtain the following spectral inclusion
\begin{prop}
\label{prop:enclosure_relation}
Let $E:X\rightarrow X$ be bounded in the Banach space $X$ and $A:X\supseteq\dom A\rightarrow X$ is closed and densely defined and consider the operator pencil $\mathcal{A}(\lambda)=\lambda E-A$ and the linear relation 
$T = \mathrm{ran}\left[\begin{smallmatrix} E \\ A \end{smallmatrix}\right]$ with $\rho(\mathcal{A})\neq\emptyset$.
Then $T\in CR(X)$ with $\rho(T)\neq\emptyset$ and 
\begin{align}
    \label{new_spectral_inclusion}
\sigma_{\rm p}(\mathcal{A})\subseteq W(T),\quad \sigma(\mathcal{A})\subseteq \cl(W(T))\cup W(T').
\end{align}
\end{prop}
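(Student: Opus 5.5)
The plan is to reduce everything to Theorem~\ref{theorem1} via the identities $\sigma_{\rm p}(\mathcal{A})=\sigma_{\rm p}(T)$ and $\sigma(\mathcal{A})=\sigma(T)$ from \cite[Proposition 3.4]{GT21}. The only genuine work is to verify the hypothesis $T\in CR(X)$ of Theorem~\ref{theorem1}, together with $\rho(T)\neq\emptyset$. I will not rely on the informal remark preceding the statement, since closedness is needed to even speak of $\rho(T)$ in the sense of the paper's definition. Instead I will argue directly.

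First, I fix $\mu\in\rho(\mathcal{A})$. Then $\mu E-A:\dom A\to X$ is bijective, and since $A$ is closed and $E$ is bounded, $\mu E-A$ is closed. By the closed graph theorem, $R:=(\mu E-A)^{-1}$ is a bounded everywhere defined operator. I then rewrite
\[
G(T)=\{(Ex,Ax): x\in\dom A\}=\{(ERz,ARz): z\in X\},
\]
using the substitution $x=Rz$. Here $ARz=\mu ERz-z$, so $G(T)=\{(ERz,\mu ERz-z):z\in X\}$.

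Next I show this graph is closed, which I expect to be the main (though mild) obstacle. Suppose $(ERz_n,\mu ERz_n-z_n)\to(u,v)$. Then $z_n=\mu ERz_n-(\mu ERz_n-z_n)\to \mu u-v=:z$. Since $ER$ is bounded, $ERz_n\to ERz$, so $u=ERz$ and $v=\mu ERz-z$, i.e.\ $(u,v)\in G(T)$. Hence $T\in CR(X)$. Equivalently, $G(T-\mu)=\{(ERz,-z)\}$, so $(T-\mu)^{-1}=-ER$ is a bounded everywhere defined operator. This shows $\ker(T-\mu)=\{0\}$ and $\ran(T-\mu)=X$, i.e.\ $\mu\in\rho(T)$, which is consistent with $\sigma(T)=\sigma(\mathcal{A})$ from \cite[Proposition 3.4]{GT21}.

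Finally, I apply Theorem~\ref{theorem1} to $T$. Part 1 gives $\sigma_{\rm p}(\mathcal{A})=\sigma_{\rm p}(T)\subseteq W(T)$, and part 3 gives $\sigma(\mathcal{A})=\sigma(T)\subseteq\cl(W(T))\cup W(T')$. This yields \eqref{new_spectral_inclusion}. Density of $\dom A$ plays no role beyond being part of the standing hypotheses under which \cite[Proposition 3.4]{GT21} is quoted.
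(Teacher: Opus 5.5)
Your proposal is correct and takes the same route as the paper: both combine $\sigma_{\rm p}(\mathcal{A})=\sigma_{\rm p}(T)$ and $\sigma(\mathcal{A})=\sigma(T)$ from \cite[Proposition 3.4]{GT21} with Theorem~\ref{theorem1}. The only difference is that you prove $T\in CR(X)$ and $\mu\in\rho(T)$ directly, via $G(T)=\{(ERz,\mu ERz-z):z\in X\}$ and $(T-\mu)^{-1}=-ER$, whereas the paper asserts this in one line (somewhat circularly, since its definition of $\rho(T)$ presupposes closedness), so your version tightens the paper's argument rather than changing it.
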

We give an example that this can lead to better estimates than using the numerical range of operator pencils. 
The main idea is that the behavior of $T$ can be analyzed through the family of numerical ranges $W(\mathcal{A}(\lambda))$.

In the Hilbert space setting, numerical ranges of operator pencils $A - \lambda E$ were introduced in  \cite{BogliMarletta2019} using inner product structures. On the other hand, in Banach spaces, the numerical range of a bounded operator was defined in \cite{Lumer61} through duality. Combining these approaches, we propose an extension of the numerical range to operator pencils in Banach spaces.

\begin{defn}
Let $E:X\rightarrow X$ be bounded between Banach space $X$  and $A:X\supseteq\dom A\rightarrow X$ is closed. Then we define the numerical ranges associated with the operator pencil $\lambda E-A$ 
\begin{align*}
W(A,E) &:= \left\{ \lambda \in \mathbb{C} ~\colon~ 0 \in \cl(W( \lambda E-A)) \right\},\\
w(A,E)&:= \left\{\frac{x^*(Ax)}{x^*(Ex)}
:\;x\in\dom(A),\;x^*\in J_X(x),\;x^*(Ex)\neq0
 \right\}.
\end{align*}
\end{defn}
If $X$ is a Hilbert space with inner product $\langle\cdot,\cdot\rangle$, we recover the classical definition of numerical range of operator pencils, see e.g. \cite{BogliMarletta2019,Tretter2008},
\[
w(A,E)=\left\{\frac{\langle Ax,x\rangle}{\langle Ex,x\rangle} : \langle Ex,x\rangle\neq 0, x\in\dom A\right\}.
\]
In the following proposition, we study the relationship between the Banach space numerical ranges of operator pencils. 
\begin{prop} \label{item:inclusion_wW}
Let $E:X\rightarrow X$ be bounded in the Banach space $X$  and $A:X\supseteq\dom A\rightarrow X$ is closed and densely defined. 
We have $$w(A,E)\subseteq W(A,E)$$
\end{prop}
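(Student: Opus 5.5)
Let $\lambda\in w(A,E)$. By the definition of $w(A,E)$ there exist $x\in\dom A$ and $x^*\in J_X(x)$ with $x^*(Ex)\neq0$ and $\lambda=\tfrac{x^*(Ax)}{x^*(Ex)}$. Necessarily $x\neq0$, since $x=0$ would force $x^*=0$ and hence $x^*(Ex)=0$. The plan is to show directly that $0\in W(\lambda E-A)$, which gives in particular $0\in\cl(W(\lambda E-A))$, and hence $\lambda\in W(A,E)$.

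Here $\lambda E-A$ is viewed as a (single-valued) linear relation with graph $\{(u,\lambda Eu-Au):u\in\dom A\}$, since $\dom(\lambda E-A)=\dom A$ because $E$ is bounded and everywhere defined. The pair $(x,\lambda Ex-Ax)$ therefore belongs to its graph, and $x\neq0$ with $x^*\in J_X(x)$. By the definition of the numerical range,
\[
\frac{x^*(\lambda Ex-Ax)}{\|x\|^2}=\frac{\lambda x^*(Ex)-x^*(Ax)}{\|x\|^2}\in W(\lambda E-A).
\]
Inserting $x^*(Ax)=\lambda x^*(Ex)$, which is just the defining identity for $\lambda$ rewritten using $x^*(Ex)\neq0$, the numerator vanishes. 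Thus $0\in W(\lambda E-A)\subseteq\cl(W(\lambda E-A))$, which is exactly the membership condition for $\lambda\in W(A,E)$.

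There is no real obstacle. The only points needing care are two bookkeeping checks. First, the functional $x^*$ used in $w(A,E)$ is admissible in the definition of $W(\lambda E-A)$, because both definitions use the same duality set $J_X(x)$. Second, the domain of the pencil at $\lambda$ is $\dom A$. Closedness and density of $A$ are not needed for this inclusion.
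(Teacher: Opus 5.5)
Your proof is correct and is the paper's argument: you show $x^*((\lambda E-A)x)=0$, so $0\in W(\lambda E-A)\subseteq\cl(W(\lambda E-A))$. You also note that $x\neq0$ is forced by $x^*(Ex)\neq0$ (since $J_X(0)=\{0\}$), a point the paper asserts without justification, and you are right that closedness and density of $A$ play no role.
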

\begin{proof}
Let $\lambda\in w(A,E)$, then there exist $x \in\dom(A)\setminus\{0\}$ and $x^*\in J_X(x)$ such that $x^*(Ex)\neq0$ and $\lambda=\frac{x^*(Ax)}{x^*(Ex)}.$\\
Hence,
\[
x^*((\lambda E-A)x)=\lambda x^*(Ex)-x^*(Ax) = 0.
\]
Therefore, $0=\frac{x^*((\lambda E-A)x)}{\|x\|^2}
\in W(\lambda E-A).$ It follows that $0 \in\cl(W(\lambda E-A))$. Consequently, $\lambda \in W(A,E)$. 
 \end{proof}

If $X$ is a Hilbert space, then it was shown under additional assumptions in \cite{BogliMarletta2019} that $\sigma(\mathcal{A})\subseteq W(A,E)$. In Banach spaces, we have 
$\lambda\in\sigma(\mathcal{A})$ if and only if $0\in\sigma(\lambda E-A)\subseteq \cl(W(\lambda E-A))\cup W(\lambda E'-A')$, thus proving that 
\[
\sigma(\mathcal{A})\subseteq W(A,E)\cup W(A',E').
\]
In the following, we give a finite-dimensional example, which shows that the spectral enclosures for operator pencils by the numerical range of the linear relation based on Proposition~\ref{prop:enclosure_relation} are better.

\begin{ex}
Let $X=(\mathbb{C}^2,\|\cdot\|_p)$, $1<p<\infty$, 
and let $q\in(1,\infty)$ be the conjugate exponent, that is, 
$\tfrac1p+\tfrac1q=1$. For some $a\in\mathbb{R}$, consider the matrix pencil 
\[
\mathcal{A}(\lambda)=\lambda E-A,\quad 
    E=
    \begin{pmatrix}
        0&0\\
        0&1
    \end{pmatrix},
    \qquad
    A=
    \begin{pmatrix}
        1&0\\
        0&a
    \end{pmatrix}.
\]
We first compute the spectrum of the pencil. Since
\[
    \lambda E-A
    =
    \begin{pmatrix}
        -1&0\\
        0&\lambda-a
    \end{pmatrix},
\]
the operator $\lambda E-A$ is invertible if and only if
$\lambda\neq a$. Consequently,
\[
    \sigma(\mathcal{A})=\{a\}.
\]
The linear relation associated with the pencil is
\[
    T
    =
    \operatorname{ran}
    \begin{bmatrix}
        E\\ A
    \end{bmatrix}
    =
    \left\{
    \left(
        \begin{pmatrix}
            0\\x_2
        \end{pmatrix},
        \begin{pmatrix}
            x_1\\ax_2
        \end{pmatrix}
    \right)
    :
    x_1,x_2\in\mathbb{C}
    \right\}.
\]
In particular, $\dom T=\operatorname{span}\{e_2\}$,
$\mul T=\operatorname{span}\{e_1\}$. We next compute the numerical range of $T$. Let $x=
\left(    \begin{smallmatrix}
        0\\x_2
    \end{smallmatrix}\right)
    \in\dom T\setminus\{0\}$.
Since $1<p<\infty$, the normalized duality mapping on $\mathbb{C}^2$ with $\|\cdot\|_p$
is single-valued. Every $x^*\in J_X(x)$ is of the form
\[
    x^*=
    \begin{pmatrix}
        0\\\overline{x_2}
    \end{pmatrix},
\]
where we identify $X^*$ with $\mathbb{C}^2$ equipped with $\|\cdot\|_q$. For every $y=
    \left(\begin{smallmatrix}
        x_1\\ax_2
    \end{smallmatrix}\right)
    \in Tx$,
we therefore obtain
\[
    \frac{x^*(y)}{\|x\|_p^2}
    =
    \frac{a|x_2|^2}{|x_2|^2}
    =a.
\]
Hence,
\[
    W(T)=\{a\}.
\]
Let us also determine the Banach adjoint relation $T'$. A pair $(f,g)\in X^*\times X^*$ belongs to $G(T')$ if and only if
\[
    f(y)=g(x)
    \qquad
    \text{for all }(x,y)\in G(T).
\]
Writing $f=\left(\begin{smallmatrix}
f_1\\ f_2    
\end{smallmatrix}\right)$ and 
  $g=\left(\begin{smallmatrix}
      g_1\\ g_2
  \end{smallmatrix}\right)$,
this condition becomes
\[
    f_1x_1+af_2x_2=g_2x_2
    \qquad
    \text{for all }x_1,x_2\in\mathbb{C}.
\]
Thus, $f_1=0$, $g_2=af_2$, whereas $g_1$ is arbitrary. Therefore,
\[
    T'
    =
    \left\{
    \left(
        \begin{pmatrix}
            0\\f_2
        \end{pmatrix},
        \begin{pmatrix}
            g_1\\af_2
        \end{pmatrix}
    \right)
    :
    f_2,g_1\in\mathbb{C}
    \right\}.
\]
The same argument as above, now in $X^*$, yields
\[
    W(T')=\{a\}.
\]
Consequently, the spectral enclosure obtained from the associated
linear relation is exact:
\[
    \sigma(\mathcal{A})
    =
    \overline{W(T)}\cup W(T')
    =
    \{a\}.
\]
For $x=
    \begin{pmatrix}
        x_1&x_2
    \end{pmatrix}^\top$ with $x_2\neq0,$
the normalized duality functional is
\[
    x^*
    =
    \|x\|_p^{2-p}
    \begin{pmatrix}
        |x_1|^{p-2}\overline{x_1}\\
        |x_2|^{p-2}\overline{x_2}
    \end{pmatrix}.
\]
Hence,
\[
    x^*(Ax)
    =
    \|x\|_p^{2-p}
    \left(
        |x_1|^p+a|x_2|^p
    \right),\qquad
    x^*(Ex)
    =
    \|x\|_p^{2-p}|x_2|^p.
\]
It follows that
\[
    \frac{x^*(Ax)}{x^*(Ex)}
    =
    a+\frac{|x_1|^p}{|x_2|^p}.
\]
Since the quotient $|x_1|^p/|x_2|^p$ may take any value in
$[0,\infty)$, we obtain
\[
    w(A,E)=a+[0,\infty).
\]

The same set is obtained from
$W(A,E)$.  Indeed, for $x\neq0$ and $x^*\in J_X(x)$,
\[
    \frac{x^*((A-\lambda E)x)}{\|x\|_p^2}
    =
    \frac{
        |x_1|^p+(a-\lambda)|x_2|^p
    }{
        |x_1|^p+|x_2|^p
    }.
\]
Thus, $0\in W(\lambda E-A)$ if and only if $|x_1|^p+(a-\lambda)|x_2|^p=0$ for some $x_2\neq0$, which is equivalent to
\[
    \lambda
    =
    a+\frac{|x_1|^p}{|x_2|^p}.
\]
Note that since the space $X$ is finite-dimensional the numerical ranges $W(\lambda E-A)$ are compact and therefore the additional closure in the definition of $W(A,E)$ can be neglected.  
Therefore,
\[
    W(A,E)=a+[0,\infty)=w(A,E),
\]
whereas  $\sigma(\mathcal{A})
    =
    W(T)
    =
    W(T')
    =
    \{a\}$.
Hence, the numerical ranges of the associated linear relation provide
an exact spectral enclosure, while the classical numerical ranges of
the pencil produce an unbounded enclosure.
\end{ex}
\section{An illustration using numerical abscissas and semigroup estimates}
We present another example where the use of the Banach space numerical range leads to exponential estimates for semigroup generators via the numerical abscissa.  

Let $\Omega=(0,1)$ and, for $p\in\{1,2\}$, define
\[
    X_p=L^p(\Omega;(\mathbb{C}^2,\|\cdot\|_p)),\quad     \|f\|_{X_p}
    =
    \left(
        \int_0^1
        \left(
            |f_1(\xi)|^p+|f_2(\xi)|^p
        \right)
        \,\mathrm{d}\xi
    \right)^{1/p}.
\]
Consider the matrix
\[
    M=
    \begin{pmatrix}
        -1&9\\
        0&-10
    \end{pmatrix}
\]
and the bounded multiplication operator $B_p$ given by $(B_pf)(\xi)=Mf(\xi)$ for $p=1,2$. 
The \emph{numerical abscissa} of a bounded operator $B$ is denoted by 
\[
    \omega(B)
    :=
    \sup\{\operatorname{Re}z:z\in W(B)\}.
\]
We first consider $B_1$ on $X_1$. Let
$f\in X_1$ satisfy $\|f\|_{X_1}=1$, and let
$f^*\in J_{X_1}(f)$. 
Identifying
\[
X_1^*
 =
L^\infty\bigl(\Omega;(\mathbb C^2,\|\cdot\|_\infty)\bigr),
\]
we may represent $f^*$ by a function $g=(g_1,g_2)$ satisfying
\[
\operatorname*{ess\,sup}_{\xi\in\Omega}
\max\{|g_1(\xi)|,|g_2(\xi)|\}=1
\]
and
\[
f^*(h)
 =
\int_0^1
\left(
h_1(\xi)\overline{g_1(\xi)}
+
h_2(\xi)\overline{g_2(\xi)}
\right)\,\mathrm d\xi.
\]

Since $f^*\in J_{X_1}(f)$, equality holds in the duality
estimate. Consequently,
\[
    \overline{g_j(\xi)}f_j(\xi)=|f_j(\xi)|,
    \qquad j=1,2,
\]
for almost every $\xi$ on the set where $f_j(\xi)\neq0$. It follows that
\begin{align*}
    \operatorname{Re}f^*(B_1f)
    &=
    \operatorname{Re}
    \int_0^1
    \left(
        -f_1\overline{g_1}
        +9f_2\overline{g_1}
        -10f_2\overline{g_2}
    \right)
    \,\mathrm{d}\xi
    \\
    &\le
    \int_0^1
    \left(
        -|f_1|
        +9|f_2|
        -10|f_2|
    \right)
    \,\mathrm{d}\xi
    \\
    &=
    -\int_0^1
    \left(
        |f_1|+|f_2|
    \right)
    \,\mathrm{d}\xi
    \\
    &=-1.
\end{align*}
Equality is attained, for example, by choosing
\[
    f=
    \begin{pmatrix}
        \varphi\\0
    \end{pmatrix},
    \qquad
    \|\varphi\|_{L^1}=1,
\]
together with a corresponding norming functional. Therefore, $\omega(B_1)=-1$.

The generated semigroup can be calculated explicitly. Since
\[
    e^{tM}
    =
    \begin{pmatrix}
        e^{-t}&e^{-t}-e^{-10t}\\
        0&e^{-10t}
    \end{pmatrix},
\]
we have
\[
    (e^{tB_1}f)(\xi)
    =
    e^{tM}f(\xi).
\]
All entries of $e^{tM}$ are nonnegative, and both column sums
are equal to $e^{-t}$. Hence, the operator norm from $\mathbb{C}^2$ with $\|\cdot\|_1$  is $\|e^{tM}\|=e^{-t}$.
Consequently, $\|e^{tB_1}\|
        =e^{-t}$, for all $t\geq0$. 
Thus, the negative numerical abscissa gives an exact exponential
decay estimate with prefactor one.

We now consider the same multiplication operator on the Hilbert space $X_2$. Its numerical abscissa is determined by the Hermitian
part of $M$:
\[
    \omega(B_2)
    =
    \lambda_{\max}
    \left(
        \frac{M+M^*}{2}
    \right)=\lambda_{\max}\left(\left(\begin{smallmatrix}
        -1&\frac92\\
        \frac92&-10
    \end{smallmatrix}\right)\right)=\frac{9\sqrt{2}-11}{2}
    >0.
\]
Indeed, if $v\in\mathbb{C}^2$ is a unit eigenvector associated
with the largest eigenvalue of $(M+M^*)/2$ and $f_0(\xi)=v$,  $\xi\in(0,1)$,
then $\|f_0\|_{X_2}=1$ and
\[
    \left.
    \frac{d}{dt}
    \|e^{tB_2}f_0\|_{X_2}^2
    \right|_{t=0}
    =
    2\operatorname{Re}
    \langle B_2f_0,f_0\rangle
    =
    2\omega(B_2)
    >0.
\]
Therefore, the $L^2$-norm increases initially in a suitable
direction. In particular, there is no $\alpha\ge0$ such that
\[
    \|e^{tB_2}\|
    \le e^{-\alpha t},
    \qquad t\ge0.
\]

Nevertheless, $B_2$ is exponentially stable. Indeed,
\[
    e^{tM}
    =
    e^{-t}
    \begin{pmatrix}
        1&1-e^{-9t}\\
        0&e^{-9t}
    \end{pmatrix},
\]
and hence the operator norm in $\mathbb{C}^2$ with the Euclidean norm is 
\[
    \|e^{tM}\|
    \le
    \|e^{tM}\|_{\mathrm{F}}
    \le
    \sqrt{2}\,e^{-t},
\]
where $\|\cdot\|_F$ denotes the Frobenius norm. Thus,
\[
    \|e^{tB_2}\|
    \le
    \sqrt{2}\,e^{-t}.
\]
Hence, the $L^1$ numerical range directly yields strict
exponential decay with prefactor one, whereas the $L^2$
numerical range detects transient growth.

\subsection*{Acknowledgment}
The authors acknowledge the support from the Deutsche Forschungsgemeinschaft DFG (Project number 567253746). 

\bibliographystyle{abbrv}

\bibliography{sample}

\end{document}